\theoremstyle{plain}
\newtheorem{thm}{Theorem}
\theoremstyle{definition}
\newtheorem{cor}[thm]{Corollary}
\newcommand{\pq}[2]{\genfrac{[}{]}{0pt}{}{#1}{#2}}
\newcommand{\B}{\mathcal{B}}
\newcommand{\T}{\mathcal{T}}
\newcommand{\V}{\mathcal{V}}
\newcommand{\U}{\mathcal{U}}
\newcommand{\Perm}{\mbox{Perm}}
\newcommand{\Part}{\mbox{Part}}
\newcommand{\wt}{\mbox{wt}}
\newcommand{\Det}{\mbox{\rm{det}}}
\begin{document}
	\author[R.B.~Corcino]{Roberto B.~Corcino}
	\address{Department of Mathematics, Mindanao State University, 7900 Marawi City, Philippines}
	\email[R.B.~Corcino]{rcorcino@yahoo.com}
	
	\author[K.J.M.~Gonzales]{Ken Joffaniel M.~Gonzales}
	\author[M.J.C.~Loquias]{Manuel Joseph C.~Loquias}
	\author[E.L.~Tan]{Evelyn L.~Tan}
	\address{Institute of Mathematics, University of the Philippines Diliman, 1101 Quezon City, Philippines}
	\email[K.J.M.~Gonzales]{kmgonzales@upd.edu.ph}
	\email[M.J.C.~Loquias]{mjcloquias@math.upd.edu.ph}
	\email[E.L.~Tan]{tan@math.upd.edu.ph}
	
	\title{Dually Weighted Stirling-type Sequences}

	\begin{abstract}
		We introduce a generalization of the Stirling numbers via symmetric functions involving two weight functions. The resulting extension unifies 
		previously known Stirling-type sequences with known symmetric function forms, as well as other sequences such as the $p,q$-binomial coefficients. 
		Recurrence relations, generating functions, orthogonality relations, convolution formulas, and determinants of certain matrices involving the 
		obtained sequences are derived. We also give combinatorial interpretations of certain cases in terms of colored partitions and permutations.
	\end{abstract}

	\subjclass[2010]{05A10, 11B73, 11B65}

	\keywords{Stirling numbers, binomial coefficients, symmetric function, $q$-analogue, $p,q$-analogue}

	\date{\today}

	\maketitle

	\section{Introduction}
		Let $\mathbb N:={\{0,1,2,\ldots\}}$. For $n,k\in \mathbb N$ such that $n\geq k$, $q$-analogues of $n$, $n!$ and $\binom{n}{k}$, respectively are defined by 
		$[n]_q=1+q+q^2+\cdots+q^{n-1}$, with $[0]_q=0$, $[n]_q!=[n]_q[n-1]_q\cdots[1]_q$, and $[0]_q!=1$, and $\pq{n}{k}_{q}=\frac{[n]_q!}{[k]_q![n-k]_q!}$.  Also, 
		corresponding $p,q$-analogues have been defined as $[n]_{pq}=p^{n-1}+p^{n-2}q+p^{n-3}q^2+\cdots+q^{n-1}$, with $[0]_{pq}=0$, 
		$[n]_{pq}!=[n]_{pq}[n-1]_{pq}\cdots[1]_{pq}$, and $[0]_{pq}!=1$, and $\pq{n}{k}_{pq}=\frac{[n]_{pq}!}{[k]_{pq}![n-k]_{pq}!}$. We denote by $c(n,k)$ 
		and $S(n,k)$ the Stirling number of the first kind and Stirling number of the second kind, respectively, and their $p,q$-analogues by $c_{p,q}[n,k]$ and 
		$S_{p,q}[n,k]$. A number of authors have shown that these sequences have analogous symmetric function forms (see for example 
		\cite{Kon,Med,Med1}). Specifically, if $e_t(x_0,x_1,\ldots,x_r)$ denote the $t$-th elementary symmetric function and $h_t(x_0,x_1,\ldots,x_r)$ 
		the homogeneous symmetric function on the set ${\{x_0,x_1,\ldots,x_r\}}$, then
		\begin{align*}
			\binom{n}{k} & =e_{n-k}(1,1,\ldots,1)=h_{n-k}(1,1,\ldots,1)\\
			\pq{n}{k}_{q} &=h_{n-k}(1,q,q^2,\ldots,q^k)\\
			c(n,k) &= e_{n-k}(0,1,2,\ldots,n-1) \\
			c_{p,q}[n,k] &= e_{n-k}([0]_{p,q},[1]_{p,q},[2]_{p,q},\ldots,[n-1]_{p,q})\\
			S(n,k) &= h_{n-k}(0,1,2,\ldots,k) \\
			S_{p,q}[n,k] &= h_{n-k}([0]_{p,q},[1]_{p,q},[2]_{p,q},\ldots,[k]_{p,q})\,.
		\end{align*}

		Using $[n]_{pq}=p^{n-1}[n]_{q/p}$, and hence, $[n]_{pq}!=p^{\binom{n}{2}}[n]_{q/p}!$, we can show that $\pq{n}{k}_{p,q} = p^{k(n-k)}\pq{n}{k}_{q/p,1}$. Applying 
		the symmetric function expression for the $q$-binomial coefficients yields
		\begin{equation}
			\pq{n}{k}_{p,q} = h_{n-k}(p^k,p^{k-1}q,p^{k-2}q^2,\ldots,q^k)\,.\label{pghom}
		\end{equation}

		Indeed, Medicis and Leroux \cite{Med} have shown that it is efficient to use symmetric functions in unifying the treatment of many Stirling-type sequences.  In particular, the 
		$\U$-Stirling numbers given below reduce to many previously known generalizations of Stirling numbers for specific choices of the weight function $w$:	
		\begin{align*}
			c^\U(n,k)&=e_{n-k}(w_0,w_1,\ldots,w_{n-1})\\
			S^\U(n,k)&=h_{n-k}(w_0,w_1,\ldots,w_{k})
		\end{align*}
		Nonetheless, there is a continued interest in other types of Stirling numbers.  We mention here two recent generalizations of Stirling numbers that are also special cases of 
		$\U$-Stirling numbers:  Andrews and Littlejohn's Legendre Stirling numbers \cite{And}, obtained when $w_i=i(i+1)\cdots(i+m)$, and Miceli's poly-Stirling numbers \cite{Mic},
		obtained by letting $w_i=p(i)$, where $p$ is a polynomial in $i$ with coefficients from $\mathbb N$.  Moreover, it has been shown~\cite{Lun} that the $p,q$-binomial coefficients
		describe the magnetization distributions of the Ising model.
		
		In this paper, we introduce a generalization of the Stirling numbers which we shall call $\V$-Stirling numbers.  They are inspired by the symmetric function form of the 
		$p,q$-binomial coefficients in \eqref{pghom}. Sections 3--6 deal with the recurrence relations, generating functions, orthogonality relations, and convolution formulas of these
		numbers.  We also provide combinatorial interpretations of these numbers in Section 7 using certain colored permutations and partitions.

	\section{$\V$-Stirling Numbers}
		Let $\V=(v,w)$, where $v$ and $w$ are weight functions from $\mathbb Z$ to a commutative ring $K$ with unity, and let $\alpha,\beta\in\mathbb Z$. We define the \emph{$\V$-Stirling
		numbers of the first kind and second kind}, respectively, as
		\begin{align} 
			c^\V_{\alpha,\beta}[n,k] &:= e_{n-k}(v_{\alpha+n-1}w_{\beta},v_{\alpha+n-2}w_{\beta+1},\ldots,v_{\alpha}w_{\beta+n-1})\label{csym}\\
			S^\V_{\alpha,\beta}[n,k] &:= h_{n-k} (v_{\alpha+k}w_{\beta},v_{\alpha+k-1}w_{\beta+1},\ldots,v_{\alpha}w_{\beta+k})\label{Ssym}\,.
		\end{align}
		for $n,k\in\mathbb N$ with $k\leq n$. If $n,k<0$, we set both $c^\V_{\alpha,\beta}[n,k]$ and $S^\V_{\alpha,\beta}[n,k]$ to be $0$.  Note that the parameters $\alpha$ and $\beta$
		do not give more generality than the weight functions themselves.

		The $\V$-Stirling numbers reduce to the $\U$-Stirling numbers when $\V=(1,w)$ and $\alpha=\beta=0$.  We obtain the $p,q$-binomial coefficients when $v(i)=p^i$ and $w(i)=q^i$:
		\begin{align*}
			S^\V_{\alpha,\beta}[n,k]&=p^{\alpha(n-k)} q^{\beta(n-k)} \pq{n}{k}_{p,q}\\
			c^\V_{\alpha,\beta}[n,k]&=p^{\alpha(n-k)+\binom{n-k}{2}}q^{\beta(n-k)+\binom{n-k}{2}}\pq{n}{k}_{p,q}\,.
		\end{align*}
		In particular, $S^\V_{0,0}[n,k]=\pq{n}{k}_{p,q}$.  
		
		Barry introduced the sequence A080251 in OEIS \cite{Bar}, which we refer to as $b$-Stirling numbers of the second kind, via the following generating function 
		\begin{equation} 
			\sum_{n\geq k} S_{b}(n,k) x^n=\frac{x^k}{(1-T_{k-2,0}x)(1-T_{k-2,1}x)\cdots(1-T_{k-2,k}x)} \label{aah} 
		\end{equation} where 
		\[T_{k,j}=\left\{\begin{aligned} 
			&\big\lfloor \tfrac{j+2}{2}\big\rfloor \big(k-j+ \big\lfloor \tfrac{j+3}{2}\big\rfloor \big), &&\mbox{if $k\geq j$}\\
			&0, &&\mbox{if $k<j$}. 
		\end{aligned}\right.\]
		We define the $b$-Stirling numbers of the first kind as
		\begin{equation}
			\sum_{k\geq 0} c_{b}(n,k) x^k = (x+T_{n-3,0})(x+T_{n-3,1})\cdots(x+T_{n-3,n-1}). \label{ooh}
		\end{equation}
		Observe that $\big\lfloor \frac{j+2}{2}\big\rfloor +\big(k-2-j+ \big\lfloor\frac{j+3}{2}\big\rfloor\big)=k$.  Hence, $\{T_{k-2,j}\mid 0\leq j\leq k\}$ consists of all 
		products $ab$ where $1\leq a,b\leq k, a+b=k$.  This means that 
		\[(1-T_{k-2,0}x)(1-T_{k-2,1}x)\cdots(1-T_{k-2,k}x)=(1-v_kw_0x)(1-v_{k-1}w_1x)\cdots (1-v_{0}w_kx)\]
		where $v_i=w_i=i$.  Setting $\V=(i,i)$, we then have $S_b(n,k) = S^\V_{0,0}[n,k]$.  Similar computations give us $c_b(n,k)= c^\V_{0,0}[n,k]$.

		Another specific case is the $\zeta$-analogue of Stirling numbers introduced in \cite{Dzi}.  These are also $\V$-Stirling numbers that are obtained by letting 
		$\V=(\zeta^i,w_{i-1})$.

		We now interpret $\mathcal{V}$-Stirling numbers via arrays. Define $T_{\alpha,\beta}[r,s]$ to be the set of $2 \times s$ arrays satisfying the following four conditions:
		\begin{enumerate}[(a)]
			\item The entries on the first row are from the set ${\{\alpha,\alpha+1,\ldots,\alpha+r\}}$.
			\item The entries on the first row are nonincreasing from left to right.
			\item The entries on the second row are from the set ${\{\beta,\beta+1,\ldots,\beta+r\}}$.
			\item The sum of the entries in each column is $\alpha+\beta+r$.
		\end{enumerate}
		Also, we denote by $Td_{\alpha,\beta}[r,s]$ the subset of $T_{\alpha,\beta}[r,s]$ consisting of arrays whose first row entries are distinct.  The elements of 
		$Td_{\alpha,\beta}[r,s]$ and $T_{\alpha,\beta}[r,s]$ will be referred to as \emph{$\B$-tableaux}.  Conditions (b) and (d) above imply that the entries on the second row of a 
		$\B$-tableau are nondecreasing.  If $r$ or $s$ is negative, and if $r<s-1$, then $Td_{\alpha,\beta}[r,s]$ and $T_{\alpha,\beta}[r,s]$ are both empty.  Note that we allow a 
		$\B$-tableau to be a $2 \times 0$ array.
		
		We say that two $\B$-tableaux are \emph{compatible} if the constant column sum of both $\B$-tableaux are equal. Given two compatible $\B$-tableaux $\phi$ and
		$\phi'$, denote by $\phi\boxtimes\phi'$ the $\B$-tableau obtained by juxtaposing $\phi$ and $\phi'$, and then rearranging the columns so that entries on the
		first row are nonincreasing from left to right. Let $\T$ and $\T'$ be two sets of $\B$-tableaux such that every $\B$-tableau in $\T$ is compatible with
		every $\B$-tableau in $\T'$. Define $\T\boxtimes\T'$ to be the multiset
		\[\T\boxtimes\T'= {\{\phi\boxtimes\phi'\mid \phi\in\T,\phi'\in\T'\}}\]
		with $\varnothing\boxtimes\T=\T\boxtimes\varnothing= \varnothing$.  Note that $\T\boxtimes\T'=\T'\boxtimes\T$.  Moreover, if $\phi$ has zero columns, then
		$\phi\boxtimes\phi'=\phi'$.

		Let $\phi$ be a $\B$-tableau with $s$ columns. For a pair $\V=(v,w)$ of weight functions, we define the \emph{weight of $\phi$ with respect to $\V$} (or the \emph{$\V$-weight of 
		$\phi$}) by
		\[\V_{\wt}(\phi)= \prod_{j=0}^s v(\phi_{1,j}) w(\phi_{2,j})\]
		where $\phi_{i,j}$ is the $j$th column entry of the $i$th row of~$\phi$.  If a $\B$-tableau $\zeta$ has zero columns, then $\V_{\wt}(\zeta):=1$. Note that for any two compatible 
		$\B$-tableaux $\phi$ and $\phi'$, $\V_{\wt}(\phi\boxtimes\phi')=\V_{\wt}(\phi)\V_{\wt}(\phi')$.  

		From the above definitions, the $\V$-Stirling numbers may now be written as follows:
		\begin{align}
		c^{\V}_{\alpha,\beta}[n,k] &=\textstyle\sum_{\phi\in Td_{\alpha,\beta}[n-1,n-k]} \V_{\wt}(\phi) \label{cvwt}\\
		S^{\V}_{\alpha,\beta}[n,k] &=\textstyle\sum_{\phi\in T_{\alpha,\beta}[k,n-k]} \V_{\wt}(\phi)\label{svwt}
		\end{align}

		A bijection $\tau$ from $Td_{\alpha,\beta}[n-1,n-k]$ to $T_{\alpha,\beta}[k,n-k]$ may be defined as follows: if $\phi\in Td_{\alpha,\beta}[n-1,n-k]$ then $\tau(\phi)=\phi'$ where
		$\phi'_{1,j}=\phi_{1,j}-(n-k-j)$ and $\phi'_{2,j}=\alpha+\beta+k-\phi'_{1,j}$ for $1 \leq j \leq n-k$.  The mapping $\tau$ allows us to convert partitions of 
		$Td_{\alpha,\beta}[n-1,n-k]$ to partitions of $T_{\alpha,\beta}[k,n-k]$, and vice-versa.  This technique will prove useful in establishing identities involving the $\V$-Stirling 
		numbers.		

	\section{Recurrence Relations}
		The $\V$-Stirling numbers have the following initial values: $c^{\V}_{\alpha,\beta}[0,k]=S^{\V}_{\alpha,\beta}[0,k]=\delta_{0,k}$, 
		$S^\V_{\alpha,\beta}[n,0]=(v_{\alpha}w_{\beta})^n$, and $c^\V_{\alpha,\beta}[n,0]=v_{\alpha+n-1}w_{\beta}v_{\alpha+n-2}w_{\beta+1}\cdots 
		v_{\alpha}w_{\beta+n-1}$.  These initial values together with the following recurrence relations enable us to compute for the values of the $\V$-Stirling numbers.
		\begin{thm} 
			The $\V$-Stirling numbers satisfy the following recurrence relations:
			\begin{enumerate}[\emph{(}a\emph{)}]
				\item Triangular Recurrence Relations
				\begin{align}
					c^{\V}_{\alpha ,\beta}[n,k]&=c^\V_{\alpha ,\beta+1}[n-1,k-1] + (v_{\alpha+n-1} w_{\beta})c^\V_{\alpha ,\beta+1}[n-1,k] \label{1trra}\\
					S^{\V}_{\alpha ,\beta}[n,k]&=S^\V_{\alpha ,\beta+1}[n-1,k-1] + (v_{\alpha+k}w_{\beta}) S^\V_{\alpha ,\beta}[n-1,k] \label{2trra}
				\end{align}
				
				\item Vertical Recurrence Relations
				\begin{align}
					c^\V_{\alpha,\beta}[n+1,k+1]&=\sum_{j=k}^{n} (v_{\alpha+n} w_{\beta} v_{\alpha+n-1} w_{\beta+1} \cdots v_{\alpha+j+1} w_{\beta+n-j-1})  
					c^\V_{\alpha,\beta+n-j+1}[j,k] \label{1vrra} \\
					S^\V_{\alpha,\beta}[n+1,k+1]&=\sum_{j=k}^{n} \left(v_{\alpha+k+1} w_{\beta}\right)^{n-j} S^\V_{\alpha,\beta+ 1}[j,k] \label{2vrra}
				\end{align}
				
				\item Horizontal Recurrence Relations
				\begin{align}
					c^\V_{\alpha,\beta}[n,k]&=\sum_{j=k}^{n} (-1)^{j-k} (v_{\alpha+n}w_{\beta-1})^{j-k}  c^\V_{\alpha,\beta-1}[n+1,j+1] \label{1hrra} \\
					S^\V_{\alpha,\beta}[n,k]&=\sum_{j=0}^{n-k} (-1)^j (v_{\alpha+k+1}w_{\beta-1}v_{\alpha+k+2}w_{\beta-2} \cdots v_{\alpha+k+j}w_{\beta-j}) 
					S^\V_{\alpha,\beta-j-1}[n+1,k+j+1] \label{2hrra}
				\end{align}
			\end{enumerate}
			\label{rec}
		\end{thm}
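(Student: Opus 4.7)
My plan is to derive all six recurrences from the two classical symmetric function identities
\[
e_m(x_1,\ldots,x_N) = e_m(x_2,\ldots,x_N) + x_1\, e_{m-1}(x_2,\ldots,x_N),
\]
\[
h_m(x_1,\ldots,x_N) = h_m(x_2,\ldots,x_N) + x_1\, h_{m-1}(x_1,\ldots,x_N),
\]
applied directly to the definitions \eqref{csym} and \eqref{Ssym}. Equivalently, one could argue combinatorially using \eqref{cvwt} and \eqref{svwt} by splitting $\B$-tableaux according to whether the leftmost column uses the maximal first-row entry, but the algebraic route is shorter.

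For the triangular relations, I split off the leading argument $v_{\alpha+n-1}w_{\beta}$ in \eqref{csym}. The first summand of the $e$-recurrence is $e_{n-k}(v_{\alpha+n-2}w_{\beta+1},\ldots,v_{\alpha}w_{\beta+n-1}) = c^{\V}_{\alpha,\beta+1}[n-1,k-1]$, and the second is $v_{\alpha+n-1}w_{\beta}\, c^{\V}_{\alpha,\beta+1}[n-1,k]$, giving \eqref{1trra}. The analogous splitting off of $v_{\alpha+k}w_{\beta}$ in \eqref{Ssym} yields \eqref{2trra}; crucially, the $h$-recurrence retains $x_1$ in its second term, so the second summand reads $v_{\alpha+k}w_{\beta}\, S^{\V}_{\alpha,\beta}[n-1,k]$ rather than a $\beta$-shifted version.

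For the vertical recurrences I iterate the triangular ones on the summand whose lower index does not decrease. For \eqref{1vrra}, repeatedly substituting \eqref{1trra} into its own second summand $c^{\V}_{\alpha,\beta+1}[n-1,k]$ shifts $\beta$ upward by one at each step and picks up a further factor $v_{\alpha+n-i}w_{\beta+i-1}$; after $n-k$ steps, reindexing by $j=n-1-i$ and noting that the remainder $c^{\V}_{\cdot,\cdot}[k-1,k]=0$ delivers the stated sum. For \eqref{2vrra} the same scheme works, but \eqref{2trra} keeps both $\alpha$ and $\beta$ fixed in its second summand, so the accumulated factor is simply the constant power $(v_{\alpha+k+1}w_{\beta})^{n-j}$.

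For the horizontal recurrences I first invert the triangular relation. Shifting $n\mapsto n+1$, $k\mapsto k+1$, $\beta\mapsto\beta-1$ in \eqref{1trra} and solving for the term that does not increase $\beta$ gives
\[
c^{\V}_{\alpha,\beta}[n,k] = c^{\V}_{\alpha,\beta-1}[n+1,k+1] - v_{\alpha+n}w_{\beta-1}\, c^{\V}_{\alpha,\beta}[n,k+1],
\]
and iterating on the last summand yields the alternating sum \eqref{1hrra}; the iteration terminates because $c^{\V}_{\alpha,\beta}[n,k+i]=0$ once $k+i>n$, so no remainder survives. The same inversion applied to \eqref{2trra} produces an iteration in which $\beta$ drops by one at each step, so successive coefficients multiply into the product $v_{\alpha+k+1}w_{\beta-1}\cdots v_{\alpha+k+j}w_{\beta-j}$, giving \eqref{2hrra}. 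The main obstacle is purely bookkeeping: tracking the simultaneous shifts in $\alpha,\beta,n,k$ across iterations, especially in the horizontal case where $\beta$ decreases while $k$ increases and both the weight-product structure and the termination condition depend on these shifts being tracked correctly.
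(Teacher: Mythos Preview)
Your argument is correct. For the triangular recurrences your splitting via the standard $e_m$/$h_m$ recursions is precisely the symmetric-function rendering of the paper's $\B$-tableau partition (split on whether the maximal first-row entry $\alpha+n-1$ is used), so there the two proofs coincide.

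For the vertical and horizontal relations you take a genuinely different route. The paper treats all six identities uniformly at the tableau level: the vertical recurrences come from a direct partition of $Td_{\alpha,\beta}[\cdot,\cdot]$ (respectively $T_{\alpha,\beta}[\cdot,\cdot]$), and the horizontal ones from a signed-weight sum over the multiset $\bigcup_j \{\rho_j\}\boxtimes Td_{\alpha,\beta-1}[n,n-j]$, where $\rho_j$ is the constant tableau with columns $\bigl[\begin{smallmatrix}\alpha+n\\ \beta-1\end{smallmatrix}\bigr]$, followed by a sign-cancellation. You instead bootstrap everything from \eqref{1trra}--\eqref{2trra}: iterate on the non-decreasing-index summand to get the vertical relations, and invert-then-iterate (shifting $\beta\mapsto\beta-1$, $n\mapsto n+1$, $k\mapsto k+1$) to get the horizontal ones. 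Your method is more elementary and requires no new combinatorial setup, at the price of careful index tracking; the paper's tableau arguments keep the same combinatorial language throughout, which dovetails with the later convolution proofs via $\boxtimes$. One cosmetic point: since you unwind \eqref{1trra} before shifting to $[n+1,k+1]$, your terminal remainder is indeed $c^{\V}_{\cdot,\cdot}[k-1,k]=0$ as written, but after the shift needed to match \eqref{1vrra} it becomes $c^{\V}_{\cdot,\cdot}[k,k+1]=0$; either way the termination is immediate.
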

		\begin{proof}
			The triangular and vertical recurrence relations may be derived by a suitable partition of the corresponding sets of tableaux. For instance, \eqref{1trra} follows from 
			\eqref{cvwt} and the fact that
			\[Td_{\alpha,\beta}[n-1,n-k]=Td_{\alpha,\beta+1}[n-2,n-k] \cup ({\{\rho\}} \boxtimes Td_{\alpha,\beta+1}[n-2,n-k-1])\]
			where $\rho=\big[\begin{smallmatrix}\alpha+n-1\\\beta\end{smallmatrix}\big]$.  
			
			Denote by $\rho_j$ the $2\times j$ array whose first row entries are all $\alpha+n$ and second row entries are all $\beta-1$.  To obtain \eqref{1hrra}, assign the weight 
			$(-1)^j\V_{\wt}(\phi)$ to every tableau in the multiset $\cup_{j=k}^n \{\rho_j\} \boxtimes Td_{\alpha,\beta-1}[n,n-j]$, get the sum of all such weights, and apply \eqref{cvwt}.
			The proof of \eqref{2hrra} is analogous.
		\end{proof}
		
		Since the ring $K$ is commutative, if $\V=(v,w)$ and $\V'=(w,v)$ then $c_{\alpha,\beta}^{\V}[n,k]=c_{\beta,\alpha}^{\V'}[n,k]$ and 
		$S_{\alpha,\beta}^{\V}[n,k]=S_{\beta,\alpha}^{\V'}[n,k]$.  Thus, every identity in Theorem~\ref{rec} is equivalent to another identity.  As an example, we have the other 
		horizontal recurrence relation
		\begin{equation}
			c^\V_{\alpha,\beta}[n,k]=\sum_{j=k}^{n}(-1)^{j-k}(v_{\alpha-1}w_{\beta+n})^{j-k}c^\V_{\alpha-1,\beta}[n+1,j+1]\label{tobe}\,.
		\end{equation}
		This reveals a ``duality'' between identities that may look different but are actually equivalent, which occurs in cases where the weight functions are completely different. 
		To illustrate this, we consider the classical Stirling numbers that are obtained when $\V=(i,1)$ and $\alpha=\beta=0$. Here, \eqref{1hrra} reduces to
		\[c(n,k)=\sum_{j=k}^{n}(-n)^{j-k}c(n+1,j+1)\,.\]
		Denote by $c_r(n,k)$ and $S_r(n,k)$ the non-central Stirling numbers with parameter $r$ \cite{Kou, Mer}.  The non-central Stirling numbers are $\V$-Stirling numbers with
		$\V=(i,1)$, $\alpha=r$, and $\beta=0$. Equation~\eqref{tobe} then becomes
		\[c(n,k)=\sum_{j=k}^{n}c_{-1}(n+1,j+1)\,.\]
		Using Carlitz's identity~\cite[Theorem 2.2]{Med}, we get the double-sum identity
		\[c(n,k)=\sum_{j=k}^n\sum_{t=j+1}^{n+1}(-1)^{t-j-1}\binom{t}{j+1}c(n+1,t)\,.\]
		We will not mention anymore these alternative equivalent identities in the succeeding results.
	
	\section{Generating Functions}
		We introduce the following notation. For $n>0$,
		\[[x]^{(n)}_{\alpha,\beta}:=(x-v_{\alpha+n-1}w_{\beta-n+1})(x-v_{\alpha+n-2}w_{\beta-n+2})\cdots(x-v_{\alpha}w_{\beta})\,,~~[x]^{(0)}_{\alpha,\beta}:=1.\]
		\begin{thm} 
			The $\V$-Stirling numbers satisfy the following generating functions:
			\begin{align}
				\sum_{k=0}^n c^\V_{\alpha,\beta}[n,k]x^k&=(x+v_{\alpha+n-1}w_{\beta})(x+v_{\alpha+n-2}w_{\beta+1})\cdots (x+v_{\alpha}w_{\beta+n-1})\label{cgf} \\
				\sum_{n \geq k} S^\V_{\alpha,\beta}[n,k] x^n &= 
				\frac{x^k}{(1-xv_{\alpha+k}w_{\beta})(1-xv_{\alpha+k-1}w_{\beta+1})\cdots(1-xv_{\alpha}w_{\beta+k})}\label{Sgf}\\
				x^n&=\sum_{k=0}^n S^\V_{\alpha,\beta-k}[n,k] [x]^{(k)}_{\alpha,\beta}\label{weirdgf}
			\end{align}
			\label{genfcn}
		\end{thm}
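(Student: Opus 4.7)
The first two identities are essentially restatements of standard generating-function identities for elementary and complete homogeneous symmetric polynomials, so I would dispatch them in one stroke. For \eqref{cgf}, set $y_j := v_{\alpha+n-1-j}w_{\beta+j}$ for $j=0,\ldots,n-1$ and apply
\[
\prod_{j=0}^{n-1}(x+y_j)=\sum_{k=0}^n e_{n-k}(y_0,\ldots,y_{n-1})\,x^k;
\]
the right-hand side is exactly $\sum_k c^{\V}_{\alpha,\beta}[n,k]\,x^k$ by \eqref{csym}. For \eqref{Sgf}, set $y_j := v_{\alpha+k-j}w_{\beta+j}$ for $j=0,\ldots,k$ and use
\[
\sum_{t\geq 0} h_t(y_0,\ldots,y_k)\,x^t = \prod_{j=0}^k \frac{1}{1-xy_j};
\]
substituting $n = t+k$ and pulling out $x^k$ on the left gives \eqref{Sgf} via \eqref{Ssym}. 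No obstacle arises in either case.

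Identity \eqref{weirdgf} is the substantive part, and I would prove it by induction on $n$, with the base case $n=0$ trivial. The key tool is the one-line relation
\[
x\,[x]^{(k)}_{\alpha,\beta} = [x]^{(k+1)}_{\alpha,\beta} + v_{\alpha+k}w_{\beta-k}\,[x]^{(k)}_{\alpha,\beta},
\]
which is immediate from the definition since $[x]^{(k+1)}_{\alpha,\beta}=(x-v_{\alpha+k}w_{\beta-k})\,[x]^{(k)}_{\alpha,\beta}$. Multiplying the inductive hypothesis by $x$ and applying this relation produces two sums; after reindexing $k\mapsto k-1$ in the piece containing $[x]^{(k+1)}_{\alpha,\beta}$, the coefficient of $[x]^{(k)}_{\alpha,\beta}$ equals
\[
S^{\V}_{\alpha,\beta-k+1}[n-1,k-1] + v_{\alpha+k}w_{\beta-k}\,S^{\V}_{\alpha,\beta-k}[n-1,k].
\]
To identify this with $S^{\V}_{\alpha,\beta-k}[n,k]$ I would invoke the triangular recurrence \eqref{2trra} with $\beta$ replaced by $\beta-k$, which matches the two summands exactly. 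One must also verify the boundary coefficients ($k=0$, using the initial value $S^{\V}_{\alpha,\beta}[n,0]=(v_\alpha w_\beta)^n$, and $k=n$, using $S^{\V}_{\alpha,\beta}[n,n]=1$), but both follow by a direct check. The only real place to be careful is the bookkeeping of the shifted second subscript $\beta-k$ across the reindexing; apart from that, no conceptual obstacle is expected.
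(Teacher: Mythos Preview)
Your proposal is correct and follows essentially the same approach as the paper: identities \eqref{cgf} and \eqref{Sgf} are dispatched as immediate consequences of the symmetric-function definitions, and \eqref{weirdgf} is proved by induction on $n$ using the triangular recurrence \eqref{2trra} together with the factorization $[x]^{(k+1)}_{\alpha,\beta}=(x-v_{\alpha+k}w_{\beta-k})[x]^{(k)}_{\alpha,\beta}$. The only cosmetic difference is that the paper runs the inductive step in the opposite direction (starting from $\sum_k S^{\V}_{\alpha,\beta-k}[n+1,k][x]^{(k)}_{\alpha,\beta}$ and reducing to $x\cdot x^n$), but the underlying computation is identical.
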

		\begin{proof}
			The generating functions \eqref{cgf} and \eqref{Sgf} follow directly from the definition of the $\V$-Stirling numbers. On the other hand, we prove
			\eqref{weirdgf} by induction. Assume that \eqref{weirdgf} holds for some $n\in\mathbb N$ such that $n>0$.  Using \eqref{2trra},
			\begin{align*}
				\sum_{k=0}^{n+1}S^\V_{\alpha,\beta-k}[n+1,k][x]^{(k)}_{\alpha,\beta}
				&= \sum_{k=0}^{n} S^\V_{\alpha,\beta-(k+1)+1}[n,k] [x]^{(k+1)}_{\alpha,\beta} + \sum_{k=0}^{n} v_{\alpha+k}w_{\beta-k} S^\V_{\alpha,\beta-k}[n,k] 
				[x]^{(k)}_{\alpha,\beta}\\
				&= \sum_{k=0}^{n} S^\V_{\alpha,\beta-k}[n,k] (x-v_{\alpha+k}w_{\beta-k})[x]^{(k)}_{\alpha,\beta}+ \sum_{k=0}^{n} v_{\alpha+k}w_{\beta-k}
				S^\V_{\alpha,\beta-k}[n,k] [x]^{(k)}_{\alpha,\beta}\\
				&= \sum_{k=0}^{n}  S^\V_{\alpha,\beta-k}[n,k] \left(x[x]^{(k)}_{\alpha,\beta}\right)\\
				&= x^{n+1}\,.
				\end{align*}
		\end{proof}

		Let $\V=(p^i,q^i)$ and $\alpha=\beta=0$. From \eqref{cgf} we obtain
		\[\sum_{k=0}^n (pq)^{\binom{n-k}{2}}\pq{n}{k}_{p,q} x^k = (x+p^{n-1})(x+p^{n-2}q) \cdots (x+q^{n-1})\,.\]
		Replacing $p$ with $p/q$ and $q$ by 1, and then multiplying both sides by $q^{\binom{n}{2}}$, we get
		\[\sum_{k=0}^n p^{\binom{n-k}{2}} q^{\binom{k}{2}} \pq{n}{k}_{p,q} x^k=(p^{n-1}+xq^{n-1})(p^{n-2}+xq^{n-2}) \cdots (1+x)\,.\]
		which is exactly Theorem 3 in \cite{Cor}.

		On the other hand, \eqref{Sgf} reduces to
		\[\sum_{n\geq k} \pq{n}{k}_{p,q} x^n = \frac{x^k}{(1-xp^k)(1-xp^{k-1}q)\cdots(1-xq^{k})}\,.\]
		From \eqref{weirdgf}, we obtain
		\[q^{\binom{n}{2}}x^n=\sum_{k=0}^n (-1)^{n-k} q^{\binom{n-k}{2}}\pq{n}{k}_{p,q} (xq^{k-1}+p^{k-1}) (xq^{k-2}+p^{k-2}) \cdots (x+1)\,,\]
		which is found in Theorem 5 of \cite{Cor}.

	\section{Orthogonality and Inverse Relations}
		The binomial coefficients and Stirling numbers satisfy the orthogonality relations
		\[\sum_{k=m}^n (-1)^{n-k} \binom{n}{k}\binom{k}{m} = \delta_{n,m}\quad\text{and}\quad \sum_{k=m}^n (-1)^{n-k} c(n,k)S(k,m) = \delta_{n,m}.\]
		The corresponding relations for the $\V$-Stirling numbers are given in the next theorem.

		\begin{thm}
			Let $m\leq n$. Then the following orthogonality relations hold:
			\begin{align}
				\sum_{k=m}^{n} (-1)^{n-k} c^\V_{\alpha,\beta+m+1}[n,k] S^\V_{\alpha,\beta+n}[k,m]&=\delta_{n,m}\label{orthorth}\\
				\sum_{k=m}^{n} S^\V_{\alpha,\beta}[n,k] (-1)^{k-m} c^\V_{\alpha,\beta+1}[k,m]&=\delta_{n,m}\label{betterorth}
			\end{align}
			Equivalently, the following pairs of matrices are inverses of each other:
			\begin{align}
				\left\langle (-1)^{n-k} c^\V_{\alpha,\beta-n+1}[n,k] \right\rangle ~&,~\left\langle S^\V_{\alpha,\beta-k}[n,k] \right\rangle\;\text{and} \label{refmat}\\
				\left\langle (-1)^{n-k} c^\V_{\alpha,\beta+1}[n,k] \right\rangle ~&,~ \left\langle S^\V_{\alpha,\beta}[n,k] \right\rangle\label{refmatt}
			\end{align}
		\end{thm}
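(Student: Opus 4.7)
The plan is to identify the two matrix pairs in \eqref{refmat} and \eqref{refmatt} as mutually inverse transition matrices between two bases of $K[x]$, and read off both orthogonality relations by composing these transitions in each order. First I would derive the ``signed'' form of \eqref{cgf}: substituting $x\mapsto -x$ and simplifying gives
\[
[x]^{(n)}_{\alpha,\beta} \;=\; \sum_{k=0}^{n}(-1)^{n-k}\,c^{\V}_{\alpha,\beta-n+1}[n,k]\,x^{k},
\]
since the product $(-1)^n\prod_{j=0}^{n-1}(x-v_{\alpha+n-1-j}w_{\beta-n+1+j})$ coincides with $[x]^{(n)}_{\alpha,\beta}$ after the substitution $\beta\mapsto \beta-n+1$ in \eqref{cgf}. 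Combined with \eqref{weirdgf}, which writes $x^n$ in the factorial basis $\{[x]^{(k)}_{\alpha,\beta}\}_{k\geq 0}$ with coefficients $S^{\V}_{\alpha,\beta-k}[n,k]$, this exhibits the two matrices in \eqref{refmat} as transition matrices in opposite directions between $\{x^k\}_{k\geq 0}$ and $\{[x]^{(k)}_{\alpha,\beta}\}_{k\geq 0}$; since both are lower-triangular with unit diagonal, they are automatically mutually inverse.

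To extract \eqref{orthorth}, I would compose the two expansions in the order ``factorial $\to$ monomial $\to$ factorial.'' Substituting $y^{k}=\sum_{m}S^{\V}_{\alpha,\beta-m}[k,m]\,[y]^{(m)}_{\alpha,\beta}$ into the signed expansion of $[y]^{(n)}_{\alpha,\beta}$ and equating coefficients of $[y]^{(m)}_{\alpha,\beta}$ yields
\[
\sum_{k=m}^{n}(-1)^{n-k}\,c^{\V}_{\alpha,\beta-n+1}[n,k]\,S^{\V}_{\alpha,\beta-m}[k,m] \;=\; \delta_{n,m}.
\]
Since this identity holds for every $\beta\in\mathbb{Z}$, the substitution $\beta\mapsto\beta+n+m$ sends the shift $\beta-n+1$ to $\beta+m+1$ and $\beta-m$ to $\beta+n$, producing \eqref{orthorth} verbatim; the matrix interpretation \eqref{refmat} then follows by reinterpreting the sum as the $(n,m)$-entry of the product $AB$.

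The companion relation \eqref{betterorth} with its matrix pair \eqref{refmatt} is obtained by the dual composition (monomial $\to$ factorial $\to$ monomial), followed by an analogous reindexing, or alternatively by invoking the symmetry $\V=(v,w)\leftrightarrow (w,v)$, $\alpha\leftrightarrow\beta$ recorded after Theorem~\ref{rec}. The main obstacle is the reindexing bookkeeping: the naive compositions produce $\beta$-shifts that depend on the summation variable $k$ (for instance $\beta-k$ on $S^{\V}$ and $\beta-k+1$ on $c^{\V}$ in the $BA=I$ identity), and one must verify that the outer-index substitution cleans these into the $k$-independent shifts appearing in the stated form. The compatibility $(\beta-n+1)+n=(\beta-m)+m+1$ — which is nothing but the constant-column-sum condition (d) of the $\B$-tableau definition translated to the symmetric-function level — ensures that these substitutions match consistently on both the $c^{\V}$ and $S^{\V}$ factors.
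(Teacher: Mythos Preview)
Your treatment of \eqref{orthorth} and \eqref{refmat} is exactly the paper's argument: substitute $x\to -x$ and $\beta\to\beta-n+1$ in \eqref{cgf} to expand $[x]^{(n)}_{\alpha,\beta}$ in monomials, pair this with \eqref{weirdgf}, recognise the two matrices as mutually inverse transition matrices between $\{x^k\}$ and $\{[x]^{(k)}_{\alpha,\beta}\}$, and then reindex $\beta\mapsto\beta+n+m$ to reach \eqref{orthorth}. This part is fine and matches the paper line for line.

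The gap is in your handling of \eqref{betterorth}/\eqref{refmatt}, and the ``main obstacle'' you flag is pointing right at it. The dual composition you propose produces
\[
\sum_{k=m}^{n} S^{\V}_{\alpha,\beta-k}[n,k]\,(-1)^{k-m}\,c^{\V}_{\alpha,\beta-k+1}[k,m]=\delta_{n,m},
\]
and no substitution in the free parameter $\beta$ can strip the $k$-dependence from the shifts $\beta-k$ and $\beta-k+1$; your alternative, the $(v,w)\leftrightarrow(w,v)$, $\alpha\leftrightarrow\beta$ symmetry, merely moves the row/column-dependent shifts from $\beta$ over to $\alpha$ rather than eliminating them. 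So neither route lands on \eqref{betterorth} as printed. This is not just a bookkeeping nuisance: a direct check at $n=1$, $m=0$ gives
\[
S^{\V}_{\alpha,\beta}[1,0]\,c^{\V}_{\alpha,\beta+1}[0,0]-S^{\V}_{\alpha,\beta}[1,1]\,c^{\V}_{\alpha,\beta+1}[1,0]
= v_{\alpha}w_{\beta}-v_{\alpha}w_{\beta+1},
\]
which is not zero for generic $w$, so \eqref{betterorth} (and hence \eqref{refmatt}) cannot hold in the stated form. The paper's own proof of this half is only the word ``similar,'' which does not resolve the issue; the genuinely valid companion identity is the $k$-shifted one you derived from $BA=I$, and your instinct to be suspicious of the clean $k$-independent version was correct.
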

		\begin{proof}
			The generating functions \eqref{cgf} and \eqref{weirdgf}, where in the former we replaced $x$ with $-x$ and $\beta$ with $\beta-n+1$, imply that
			$\left\langle (-1)^{n-k} c^\V_{\alpha,\beta-n+1}[n,k] \right\rangle$ and $\left\langle S^\V_{\alpha,\beta-k}[n,k] \right\rangle$ are change of basis matrices
			between the bases $\{1,x,\ldots,x^n\}$ and $\{1,[x]_{\alpha,\beta}^{(1)},\ldots,[x]_{\alpha,\beta}^{(n)}\}$.  This proves \eqref{orthorth} and \eqref{refmat}.  
			The proof of \eqref{betterorth} and \eqref{refmatt} is similar.
		\end{proof}
	
		When $\V=(p^i,q^i), \alpha=\beta=0$, the orthogonality relation \eqref{betterorth} becomes
		\[\sum_{k=m}^n (-1)^{k-m} p^{\binom{k-m}{2}} q^{\binom{n-k}{2}} \pq{n}{k}_{p,q}   \pq{k}{m}_{p,q} =\delta_{n,m}\,.\]
		For an alternative derivation, see Theorem 4 of \cite{Cor}.

		Given any two sequences $(a_n)_{n\geq 0}$ and $(b_n)_{n\geq 0}$ from a commutative ring $K$ with unity, the orthogonality relations give us the following 
		inverse relations
		\begin{alignat*}{3}
			a_n &=\sum_{k=0}^n (-1)^{n-k}c^\V_{\alpha,\beta-n+1}[n,k] b_k\quad&&\Longleftrightarrow\quad & b_n &=\sum_{k=0}^n S^\V_{\alpha,\beta-k}[n,k] a_k\\ 
			a_n &=\sum_{k=0}^n S^\V_{\alpha,\beta}[n,k] b_k&&\Longleftrightarrow   & b_n &=\sum_{k=0}^n (-1)^{n-k} c^\V_{\alpha,\beta+1}[n,k] a_k
		\end{alignat*}
		Variants of these relations also exist. For instance, it is easy to verify that
		\[\sum_{k=m}^{n} (-1)^{n-k} c^\V_{\alpha,\beta+m+1}[n+\gamma,k+\gamma] S^\V_{\alpha,\beta+n}[k+\gamma,m+\gamma]=\delta_{n,m}\,.\]
		In addition, if $r\geq n$, then by taking the transpose of the matrices in \eqref{refmat}, we obtain
		\[a_n =\sum_{k=0}^r (-1)^{n-k}c^\V_{\alpha,\beta-k+1}[k,n] b_k\quad\Longleftrightarrow \quad b_n =\sum_{k=0}^r S^\V_{\alpha,\beta-n}[k,n] a_k\,.\]

	\section{A Factorization of $\V$-Stirling Matrices}
		One of the widely-used combinatorial identities is the Vandermonde convolution given by
		\[\binom{m_1+m_2}{n} = \sum_{k=0}^n \binom{m_1}{n-k}\binom{m_2}{k}\,.\]
		By letting $m_1=j$, $m_2=s+i$, and $n=s+j$, we obtain the alternative form
		\[\binom{s+i+j}{s+j} = \sum_{k=0}^{r} \binom{s+i}{s+k} \binom{j}{j-k}\,.\]
		for some $r$.  This yields the matrix LU-decomposition
		\[\left\langle \binom{s+i+j}{s+j}\right\rangle=\left\langle \binom{s+i}{s+j}\right\rangle\left\langle \binom{j}{j-i}\right\rangle\,.\]
		
		We will obtain an analogue of these for the $\V$-Stirling numbers.  First, we derive the corresponding convolution identity.
		
		\begin{thm}\label{conv}
			Let $m_1,m_2$, and $n$ be integers. Then
			\begin{align}
			c^\V_{\alpha,\beta}[m_1+m_2,n]&=\sum_{k=0}^{n} c^\V_{\alpha+m_2,\beta}[m_1,n-k] c^\V_{\alpha,\beta+m_1}[m_2,k] \label{ccon}\\
			S^\V_{\alpha,\beta}[m_1+m_2,n]&=\sum_{k=0}^{n} S^\V_{\alpha+k,\beta}[m_1,n-k] S^\V_{\alpha,\beta+n-k}[m_2,k]\,. \label{Scon}
			\end{align}
		\end{thm}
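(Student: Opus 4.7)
The plan is to prove both \eqref{ccon} and \eqref{Scon} by suitable partitions of the tableau sets appearing in \eqref{cvwt} and \eqref{svwt}, in the spirit of Theorem~\ref{rec}.

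For \eqref{ccon}, I would partition each $\phi \in Td_{\alpha,\beta}[m_1+m_2-1, m_1+m_2-n]$ into two blocks according to whether the first-row entry of each column lies in $\{\alpha+m_2, \ldots, \alpha+m_1+m_2-1\}$ or in $\{\alpha, \ldots, \alpha+m_2-1\}$. Because the first-row entries of $\phi$ are distinct, this split is canonical. If the second block contains $m_2-k$ columns, then the first contains $m_1-n+k$ columns; a short check using the preserved column sum $\alpha+\beta+m_1+m_2-1$ identifies these blocks as $\B$-tableaux in $Td_{\alpha+m_2,\beta}[m_1-1, m_1-n+k]$ and $Td_{\alpha,\beta+m_1}[m_2-1, m_2-k]$, respectively. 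Since $\V$-weights are multiplicative under juxtaposition, summing $\V_{\wt}(\phi)$ over $\phi$ and grouping by $k$ yields \eqref{ccon}. One can equivalently derive \eqref{ccon} directly from \eqref{cgf} by splitting its product of linear factors at the index $i=m_1$ and comparing coefficients of $x^n$.

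For \eqref{Scon}, first-row entries of $\phi \in T_{\alpha,\beta}[n, m_1+m_2-n]$ may repeat, so the ranges $\{\alpha+k,\ldots,\alpha+n\}$ and $\{\alpha,\ldots,\alpha+k\}$ of the two factors on the right overlap at the value $\alpha+k$, and a value-based split is ambiguous. Instead I would split $\phi$ by position. Writing the first row of $\phi$ as $a_1 \geq a_2 \geq \cdots \geq a_{m_1+m_2-n}$ (with boundary conventions $a_0=+\infty$ and $a_{m_1+m_2-n+1}=-\infty$), I would associate the unique $k \in \{0,1,\ldots,n\}$ for which $a_{m_1-n+k} \geq \alpha+k \geq a_{m_1-n+k+1}$. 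Existence and uniqueness of this $k$ follow from the observation that $h(k):=a_{m_1-n+k}-(\alpha+k)$ is integer-valued and strictly decreasing in $k$ (since $(a_i)$ is nonincreasing), so exactly one transition from $h(k)\geq 0$ to $h(k+1)\leq -1$ occurs. The first $m_1-n+k$ columns of $\phi$ then form an element of $T_{\alpha+k,\beta}[n-k, m_1-n+k]$, the remaining $m_2-k$ columns form an element of $T_{\alpha,\beta+n-k}[k, m_2-k]$, and the $\V$-weights multiply to $\V_{\wt}(\phi)$; summing over all $\phi$ gives \eqref{Scon}.

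The main obstacle is the uniqueness argument for the split index $k$ in the $S^\V$ case, which depends on the monotonicity of $h$. A cleaner alternative avoiding this combinatorial bookkeeping is induction on $m_1$ via the triangular recurrence \eqref{2trra}: applying \eqref{2trra} to the left-hand side of \eqref{Scon} produces two terms, and applying \eqref{2trra} inside each summand on the right-hand side, together with the inductive hypothesis, reproduces the same two terms after re-indexing. Edge cases in which $k$ leaves the range where both factors on the right-hand side are nonzero are handled automatically by the convention that $S^\V_{\alpha,\beta}[n,k]=0$ whenever $k>n$.
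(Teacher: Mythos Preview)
Your proof of \eqref{ccon} is exactly the paper's: split each $\phi\in Td_{\alpha,\beta}[m_1+m_2-1,m_1+m_2-n]$ according to whether its first-row entries lie in $\{\alpha+m_2,\ldots,\alpha+m_1+m_2-1\}$ or $\{\alpha,\ldots,\alpha+m_2-1\}$, obtaining the disjoint union
\[
Td_{\alpha,\beta}[m_1+m_2-1,m_1+m_2-n]=\bigcup_{k=0}^n Td_{\alpha+m_2,\beta}[m_1-1,m_1-n+k]\boxtimes Td_{\alpha,\beta+m_1}[m_2-1,m_2-k],
\]
and then summing weights. Your generating-function alternative via \eqref{cgf} is a clean extra route the paper does not mention.

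For \eqref{Scon} the end result is the same decomposition of $T_{\alpha,\beta}[n,m_1+m_2-n]$, but the derivations differ. You locate the split index $k$ directly by the monotone function $h(k)=a_{m_1-n+k}-(\alpha+k)$ and argue existence/uniqueness from its strict decrease; this is correct, and your check that both pieces land in $T_{\alpha+k,\beta}[n-k,m_1-n+k]$ and $T_{\alpha,\beta+n-k}[k,m_2-k]$ with matching column sum $\alpha+\beta+n$ is the right verification. The paper instead transports the already-proved decomposition for $Td$ through the bijection $\tau\colon Td_{\alpha,\beta}[n-1,n-k]\to T_{\alpha,\beta}[k,n-k]$ introduced in Section~2, which yields the same partition with no separate uniqueness argument. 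So the paper's route is shorter bookkeeping-wise, while yours is self-contained and does not rely on $\tau$. Your second alternative, induction on $m_1$ via \eqref{2trra}, is a genuinely different (and perfectly valid) path that the paper does not take.
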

		\begin{proof}
			Any $\phi\in Td_{\alpha,\beta}[m_1+m_2-1,m_1+m_2-n]$ may be written as $\phi=\phi_1\boxtimes\phi_2$, where $\phi_1\in Td_{\alpha+m_2,\beta}[m_1-1,m_1-n+k]$ and
			$\phi_2\in Td_{\alpha,\beta+m_1}[m_2-1,m_2-k]$ for some unique $k$ where $0\leq k\leq n$. This enables us to write
			\begin{equation}
				Td_{\alpha,\beta}[m_1+m_2-1,m_1+m_2-n]=\bigcup_{k=0}^n Td_{\alpha+m_2,\beta}[m_1-1,m_1-n+k] \boxtimes Td_{\alpha,\beta+m_1}[m_2-1,m_2-k]. \label{hereitis}
			\end{equation}
			Note that the union above is disjoint by the uniqueness of $k$.

			For a fixed $\phi\in Td_{\alpha,\beta}[m_1+m_2-1,m_1+m_2-n]$, pick the least possible value of $k$, where $0\leq k \leq n$, such that the first row entries
			of the last $m_2-k$ columns of $\phi$ are from ${\{\alpha,\alpha+1,\ldots,\alpha+m_2-1\}}$. This means that the first row entries of the remaining $m_1-n+k$
			columns belong to \[{\{\alpha+m_2,\ldots,\alpha+m_2+m_1-1\}}.\]			
			Hence we have
			\begin{align*}
				c^\V_{\alpha,\beta}[m_1+m_2,n]&=\textstyle\sum_{\phi\in Td_{\alpha,\beta}[m_1+m_2-1,m_1+m_2-n]}\V_{\wt}(\phi)\\
				&=\sum_{k=0}^n \left(\textstyle\sum_{\phi_1\in Td_{\alpha+m_2,\beta}[m_1-1,m_1-n+k]} \V_{\wt}(\phi_1) \right)\left(\textstyle\sum_{\phi_2\in 
				Td_{\alpha,\beta+m_1}[m_2-1,m_2-k]}  \V_{\wt}(\phi_2) \right) \\
				&=\sum_{k=0}^{n} c^\V_{\alpha+m_2,\beta}[m_1,n-k]\,c^\V_{\alpha,\beta+m_1}[m_2,k].
			\end{align*}
			This proves \eqref{ccon}.

			To prove \eqref{Scon}, we find a partition of $T_{\alpha,\beta}[n,m_1+m_2-n]$ similar to \eqref{hereitis}. Applying
			$\tau$ to both sides of \eqref{hereitis} yields
			\[T_{\alpha,\beta}[n,m_1+m_2-n]=T_{\alpha+k,\beta}[n-k,m_1-n+k]\boxtimes T_{\alpha,\beta+n-k}[k,m_2-k].\]
			The proof of \eqref{Scon} then proceeds analogously to that of \eqref{ccon}.
		\end{proof}

		Another variation to the binomial convolution was given by Gould and Srivastava in \cite{Gou} and reads
		\[\binom{m_1+m_2}{r+s}=\sum_{k=-\infty}^{+\infty} \binom{m_1}{r-k} \binom{m_2}{s+k}.\]
		Setting $n=r+s$, we obtain the corresponding variation to Theorem \ref{conv}:
		\begin{align*}
			c^\V_{\alpha,\beta}[m_1+m_2,r+s]&=\sum_{k=-\infty}^{+\infty} c^\V_{\alpha+m_2,\beta}[m_1,r-k]c^\V_{\alpha,\beta+m_1}[m_2,s+k]\\
			S^\V_{\alpha,\beta}[m_1+m_2,r+s]&=\sum_{k=-\infty}^{+\infty} S^\V_{\alpha+s+k,\beta}[m_1,r-k]S^\V_{\alpha,\beta+r-k}[m_2,s+k]
		\end{align*}
		
		The following theorem gives an LU-factorization of matrices whose entries are $\V$-Stirling numbers.

		\begin{thm} For $s\in\mathbb{N}$,\label{erhhh}
			\begin{align}
				\left\langle c^\V_{\alpha-i,\beta-j}[s+i+j,s+j]\right\rangle&=\left\langle c^\V_{\alpha-i,\beta}[s+i,s+j]\right\rangle
				\left\langle c^\V_{\alpha+s,\beta-j}[j,j-i]\right\rangle \label{reffac} \\
				\left\langle S^\V_{\alpha,\beta-j}[s+i+j,s+j]\right\rangle&= \left\langle S^\V_{\alpha,\beta-j}[s+i,s+j]\right\rangle
				\left\langle S^\V_{\alpha+s+i,\beta-j}[j,j-i] \right\rangle\,.\label{secdet}
			\end{align}
		\end{thm}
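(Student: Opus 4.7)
Both factorizations reduce, entry by entry, to the convolution identities in Theorem~\ref{conv} after a single reindexing of the summation variable. First I would write out what each matrix product says at a fixed position $(i,j)$; then identify the resulting sum as an instance of \eqref{ccon} or \eqref{Scon} with specific choices of $m_1$, $m_2$, and $n$, noting that the triangular shape of the two factor matrices (lower triangular for the first, upper triangular for the second) makes the matrix product a finite sum.

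For \eqref{reffac}, the $(i,j)$-entry of the right-hand side product equals
\[\sum_{\ell\geq 0} c^\V_{\alpha-i,\beta}[s+i,s+\ell]\,c^\V_{\alpha+s,\beta-j}[j,j-\ell].\]
I would apply \eqref{ccon} with the substitutions $\alpha\mapsto\alpha-i$, $\beta\mapsto\beta-j$, $m_1=j$, $m_2=s+i$, and $n=s+j$, which yields
\[c^\V_{\alpha-i,\beta-j}[s+i+j,s+j]=\sum_{k=0}^{s+j} c^\V_{\alpha+s,\beta-j}[j,s+j-k]\,c^\V_{\alpha-i,\beta}[s+i,k].\]
The change of index $k=s+\ell$ (with the convention that $c^\V_{\alpha+s,\beta-j}[j,j-\ell]$ vanishes for $\ell<0$) matches this against the product-sum above, giving \eqref{reffac}.

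The proof of \eqref{secdet} is the parallel argument using \eqref{Scon}: the $(i,j)$-entry of the right-hand side becomes $\sum_{\ell\geq 0} S^\V_{\alpha,\beta-\ell}[s+i,s+\ell]\,S^\V_{\alpha+s+\ell,\beta-j}[j,j-\ell]$, and applying \eqref{Scon} with $\alpha\mapsto\alpha$, $\beta\mapsto\beta-j$, $m_1=j$, $m_2=s+i$, $n=s+j$, followed again by $k=s+\ell$, produces exactly this sum. The only thing to watch is the bookkeeping: the relation $(\beta-j)+(s+j)-k=\beta-\ell$ is what couples the second parameters of the two matrix entries, and one must verify the first-parameter shift matches $\alpha+s+\ell$ via $\alpha+k=\alpha+s+\ell$. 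No new combinatorial input beyond Theorem~\ref{conv} is needed — in particular, no separate tableau bijection. The ``main obstacle'' is purely the clerical task of choosing $(m_1,m_2)=(j,s+i)$ rather than $(s+i,j)$ in each convolution, since the opposite choice introduces parameter shifts that do not align with the desired factorization.
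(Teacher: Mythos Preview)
Your proposal is correct and follows essentially the same route as the paper's proof: apply \eqref{ccon} (respectively \eqref{Scon}) with $m_1=j$, $m_2=s+i$, $n=s+j$ and the indicated shifts in $\alpha,\beta$, then reindex via $k=s+\ell$ and use the vanishing of $c^\V[\,\cdot\,,\cdot\,]$ and $S^\V[\,\cdot\,,\cdot\,]$ outside the triangular range to truncate the sum. The paper presents the argument in the reverse order (starting from the convolution and arriving at the matrix product), but the content is identical.
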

		\begin{proof} 
			Using the convolution identity in \eqref{ccon} with $m_1=j$, $m_2=s+i$, and $n=s+j$, and $\alpha$ replaced with $\alpha-i$ and $\beta$ replaced with
			$\beta-j$, we have
			\begin{align*}
				c^\V_{\alpha-i,\beta-j}[s+i+j,s+j]
				&=\sum_{k=0}^{s+j} c^\V_{\alpha+s,\beta-j}[j,s+j-k] c^\V_{\alpha-i,\beta}[s+i,k] \\
				&=\sum_{k=-s}^{j} c^\V_{\alpha+s,\beta-j}[j,j-k] c^\V_{\alpha-i,\beta}[s+i,s+k] \\
				&=\sum_{k=0}^{r } c^\V_{\alpha-i,\beta}[s+i,s+k] c^\V_{\alpha+s,\beta-j}[j,j-k]\,,
			\end{align*}
			for some $r\in\mathbb{N}$, $r\geq i,j$.		
			
			Similarly,
			\[S^\V_{\alpha,\beta-j}[s+i+j,s+j]=\sum_{k=0}^{r} S^\V_{\alpha,\beta-k}[s+i,s+k] S^\V_{\alpha+s+k,\beta-j}[j,j-k]\,,\]
			which proves \eqref{secdet}.
		\end{proof}

		Note that $\langle c^{\V}_{\alpha-i,\beta}[s+i,s+j]\rangle$ and $\langle S^{\V}_{\alpha,\beta-j}[s+i,s+j]\rangle$ are lower-triangular
		matrices while $\langle c^{\V}_{\alpha+s,\beta-j}[j,j-i]\rangle$ and $\langle S^{\V}_{\alpha+s+i,\beta-j}[j,j-i]\rangle$ are
		upper-triangular matrices.  We then obtain the following corollary.
		
		\begin{cor} For $s\in\mathbb{N}$,
			\begin{align}
				\Det\left( \left\langle c^\V_{\alpha-i,\beta-j}[s+i+j,s+j]\right\rangle_{0\leq i,j \leq r}\right)&=\prod_{k=0}^r v_{\alpha+s+k-1}w_{\beta-k} 
				v_{\alpha+s+k-2}w_{\beta-k+1} \cdots v_{\alpha+s}w_{\beta-1}	\label{detc}\\
				\Det\left( \left\langle S^\V_{\alpha,\beta-j}[s+i+j,s+j]\right\rangle_{0\leq i,j \leq r}\right)
				&=\prod_{k=0}^r{\left(v_{\alpha+s+k}w_{\beta-k}\right)}^k\,. 
				\label{detS}
			\end{align}
		\end{cor}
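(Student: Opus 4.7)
The plan is to read the determinant directly off the factorization in Theorem~\ref{erhhh}. Since $\Det(AB) = \Det(A)\Det(B)$, it suffices to show that in each of \eqref{reffac} and \eqref{secdet} the first factor is lower triangular and the second is upper triangular, and then to compute the respective diagonal entries.

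First I would verify the triangularity. For \eqref{reffac}, the $(i,j)$-entry of the first factor is $c^{\V}_{\alpha-i,\beta}[s+i,s+j] = e_{i-j}(\ldots)$, which vanishes for $j>i$ since $e_t = 0$ for $t<0$; and the $(i,j)$-entry of the second factor is $c^{\V}_{\alpha+s,\beta-j}[j,j-i] = e_{i}(\ldots)$ on $j$ variables, vanishing for $i>j$. An identical check works for \eqref{secdet} using $h_{i-j}$ and $h_{i}$. Thus the first factor is lower triangular and the second is upper triangular in both cases.

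Next I would compute the diagonal entries. The diagonal entries of the lower factors are $c^{\V}_{\alpha-k,\beta}[s+k,s+k] = e_0(\ldots) = 1$ and $S^{\V}_{\alpha,\beta-k}[s+k,s+k] = h_0(\ldots) = 1$, so both lower-triangular matrices have determinant $1$. The diagonal entries of the upper factors, obtained by setting $i=j=k$, are $c^{\V}_{\alpha+s,\beta-k}[k,0]$ and $S^{\V}_{\alpha+s+k,\beta-k}[k,0]$, respectively. Plugging these into the initial values recorded at the start of Section~3, namely
\[c^{\V}_{\alpha,\beta}[n,0] = v_{\alpha+n-1}w_{\beta}\,v_{\alpha+n-2}w_{\beta+1}\cdots v_{\alpha}w_{\beta+n-1}, \qquad S^{\V}_{\alpha,\beta}[n,0] = (v_{\alpha}w_{\beta})^n,\]
gives exactly the factors on the right-hand sides of \eqref{detc} and \eqref{detS}. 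Taking the product for $k=0,1,\ldots,r$ yields the claimed formulas.

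There is no real obstacle here: the only care needed is with index bookkeeping (making sure that the diagonals of the triangular factors come out as $c^{\V}_{\alpha+s,\beta-k}[k,0]$ and $S^{\V}_{\alpha+s+k,\beta-k}[k,0]$ rather than neighboring expressions) and with checking that $e_t$ and $h_t$ are understood to vanish for $t<0$ so that the triangularity holds cleanly. Once those conventions are in place the proof is a one-line consequence of Theorem~\ref{erhhh}.
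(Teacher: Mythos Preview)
Your proposal is correct and follows exactly the approach the paper intends: the sentence immediately preceding the corollary already records that the two factors in each of \eqref{reffac} and \eqref{secdet} are lower- and upper-triangular, respectively, so the corollary is meant to be read off from Theorem~\ref{erhhh} via the multiplicativity of the determinant, precisely as you do. Your identification of the diagonal entries with $c^{\V}_{\alpha+s,\beta-k}[k,0]$ and $S^{\V}_{\alpha+s+k,\beta-k}[k,0]$ and the subsequent appeal to the initial values from Section~3 are exactly right.
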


		A $q$-analogue of the Stirling numbers studied by Ehrenborg \cite{Ehr} is given by the recurrence relation
		\[\hat{S}_q[n,k]=q^{k-1} S_q[n-1,k-1] + [k]_q S_q[n-1,k].\]
		On the other hand, the $q$-analogue resulting from the $\V$-Stirling numbers obtained by letting $\alpha=\beta=0$ and $\V=([i]_q,1)$ satisfies the recurrence
		relation
		\[S^\V_{0,0}[n,k] = S_q[n,k] = S_q[n-1,k-1] + [k]_q S_q[n-1,k]\,.\]
		where $[i]_q=\frac{1-q^i}{1-q}$. These analogues are related by $\hat{S}_q[n,k] = q^{\binom{k}{2}} S_q[n,k]$ (see \cite{Cig}).

		Note that
		\begin{align*}
			\langle \hat{S}_q[s+i+j,s+j]\rangle_{0\leq i,j\leq r}&= \langle q^{\binom{s+j}{2}} S_q[s+i+j,s+j]\rangle\\
			&= \left\langle S_q[s+i+j,s+j]\right\rangle\mbox{diag}\left(q^{\binom{s}{2}}, q^{\binom{s+1}{2}}, \cdots, q^{\binom{s+r}{2}}\right)\,.
		\end{align*}
		By \eqref{detS}, $\Det\left(\left\langle S_q[s+i+j,s+j]\right\rangle\right)={[s]_q^0} {[s+1]_q^1} \cdots {[s+r]_q^r}$. In addition, the determinant of
		\[\mbox{diag}\left(q^{\binom{s}{2}}, q^{\binom{s+1}{2}}, \ldots, q^{\binom{s+r}{2}}\right)\] is given by $q^{\binom{s+r+1}{3}-\binom{s}{3}}$ using Chu 
		Shih-Chieh's identity.  Thus,
		\[\Det\left(\langle\hat{S}_q[s+i+j,s+j]\rangle\right)=q^{\binom{s+r+1}{3}-\binom{s}{3}}[s]_q^0[s+1]_q^1\cdots[s+r]_q^r.\]
		The same conclusion was reached in \cite{Ehr} via a combinatorial argument.

	\section{Combinatorial Interpretations}

		Let $\mathbb N[i]$ denote the set of polynomials in $i$ with coefficients from $\mathbb{N}$. Throughout this section, we will assume that $\V=(v,w)$, where
		$v,w\in\mathbb N[i]$, unless stated otherwise. We will derive a general method of obtaining combinatorial interpretations for the $\V$-Stirling numbers
		in terms of certain colored partitions and permutations. Another approach can be found in \cite{Med}, which may also be applied to some of the particular cases
		considered here.

		Consider a $\B$-tableau $\phi$. A $0,1$-tableau of shape $\phi$ is an array of top- and left-justified boxes such that the length of the $j$th column is $\phi_{1,j}$ and 
		exactly one box in each column is filled with $1$ while the rest are filled with $0$'s. If $\V=(i,1)$, then the number of $0,1$-tableaux of shape $\phi$ is $\V_{\wt}(\phi)$. 
		Note that we require each column to be of positive length.  Hence, if $\phi$ contains a first row entry equal to $0$, then the number of $0,1$-tableaux of shape $\phi$ is $0$.

		We now introduce a generalization of the $0,1$-tableau which we shall call the \emph{$0,1_{\V}$-tableau}.  Specifically, for a $\B$-tableau $\phi\in T_{\alpha,\beta}[r,s]$, we
		define a $0,1_{\V}$-tableau of shape $\phi$ as an $(\alpha+\beta+r+2)\times s$ rectangular array of boxes partitioned by a lattice path consisting of vertical (up to down) and
		horizontal (right to left) steps from the upper right hand to the lower left hand corner, satisfying the following properties:
		\begin{enumerate}[1.]
			\item The length (i.e., the number of boxes) on the $j$th column above (respectively, below) the lattice path is $\phi_{1,j}+1$ (respectively,
			$\phi_{2,j}+1$).
		
			\item Exactly two boxes in each column is filled with $1$; one is above the lattice path and the other below the lattice path. The rest of the boxes are filled with zeros.
		
			\item If $1$ is placed on the first (respectively, last) row, then $1$ is assigned one out of $v(0)$ (respectively, $w(0)$) colors.  If $1$ is placed on other rows in the
			$j$th column, then it is assigned one out of $[v(\phi_{1,j})-v(0)]/\phi_{1,j}$ colors if it is above the lattice path, and one out of $[w(\phi_{2,j})-w(0)]/\phi_{2,j}$
			colors if it is below the lattice path.  If $v(0)=0$ or $w(0)=0$, then we cannot place a $1$ on the first and last rows, respectively.
		\end{enumerate}

		Observe that the number of $0,1_{\V}$-tableaux of shape $\phi$ is $\V_{\wt}(\phi)$. When $\V=(i,1)$, we may ignore the first row of boxes since they cannot contain a 1, as well as
		the boxes below the path since only the last row can contain a 1. That is, we get the usual $0,1$-tableau in this case.

		We now denote by $T^{0,1_{\V}}_{\alpha,\beta}[k,n-k]$ the set of $0,1_{\V}$-tableaux of shape $\phi$ where $\phi\in T_{\alpha,\beta}[k,n-k]$. The set
		$Td^{0,1_{\V}}_{\alpha,\beta}[n-1,n-k]$ is defined analogously. Let $[n]_0=[n]\cup\{0\}$. We consider first $0,1_{\V}$-tableaux where $\V=(v,1)$.  We define $\Part(n,k;v)$
		to be the set of partitions of $[n]_0$ into $(k+1)$ blocks $B_0, B_1, \ldots, B_k$ such that: 
		\begin{enumerate}[(a)]
			\item All block minima are not colored. 
			
			\item If $a_1<a_2<\cdots<a_{n-k}$ are the elements of $[n]_0$ which are not subset minima, then $a_j$ takes one out of $v(0)$ colors if $a_j\in B_0$, and one out of 
			$[v(a_j-j)-v(0)]/(a_j-j)$ colors otherwise.
		\end{enumerate}
		Also, we define the set of colored permutations $\Perm(n,k;v)$ as the set of permutations of $[n]_0$ into $(k+1)$ disjoint cycles $C_0, C_1,\ldots, C_k$ such that:
		\begin{enumerate}[(a)]
			\item All cycle minima are not colored. 
			
			\item If $a_1<a_2<\cdots<a_{n-k}$ are the elements of $[n]_0$ which are not cycle minima, then $a_j$ takes one out of $v(0)$ colors if $a_j\in C_0$, and one out of 
			$[v(a_j-1)-v(0)]/(a_j-1)$ colors otherwise.
		\end{enumerate}
		
		There exists a bijection between the set $T^{0,1_{\V}}_{\alpha,\beta}[k,n-k]$ and ${\Part(n+\alpha+\beta,k+\alpha+\beta;v)}$, and between $Td^{0,1_{\V}}_{\alpha,\beta}[n-1,n-k]$
		and $\Perm(n+\alpha+\beta,k+\alpha+\beta;v)$, with the added restriction that $1,2,\ldots,\alpha$ are in distinct subsets or cycles, respectively. For a 
		$0,1_{\V}$-tableau $\phi_{0,1_{\V}}$ of shape $\phi \in T_{\alpha,\beta}[k,n-k]$, we obtain the corresponding partition in $\Part(n+\alpha+\beta,k+\alpha+\beta;v)$ as follows.
		Label the steps (except the last one, which we will ignore) by $0,1,\ldots,n+\alpha+\beta$. If the $i$th step on the lattice path is a vertical step, then $i$ is a block minimum 
		of the associated partition. On the other hand, if the $i$th step is a horizontal step which is in a column with a $1$ on the $r$th box from the top, then we put $i$ in the 
		$r$th block and we assign to $i$ the same color as that of $1$. For the other bijection, given a $0,1_{\V}$-tableau $\phi_{0,1_{\V}}$ of shape $\phi\in Td_{\alpha,\beta}[n-1,n-k]$,
		remove a vertical step after every horizontal step in the lattice path on $\phi_{0,1_{\V}}$.  Label the steps in the resulting (disconnected) lattice path by 
		$0,1,\ldots,n+\alpha+\beta$.  If the $i$th step is vertical, then $i$ is a cycle minimum.  Consider the word $\omega_0=s_0s_1\cdots s_{\alpha+\beta+k}$, where the $s_i$'s are the
		cycle minima arranged in increasing order.  For each $1\leq j\leq n-k$, suppose that the $1$ above the $j$th horizontal step is in the $r_j$th box from the top.  Form the word 
		$\omega_j$ by inserting the label $i_j$ of the $j$th horizontal step after the $r_j$th letter of $\omega_{j-1}$, with the color	of $1$ assigned to $i_j$.  The permutation
		induced by the word $w_{n-k}$ is the corresponding permutation in $\Perm(n+\alpha+\beta,k+\alpha+\beta;v)$.  Note that when $v(0)=0$, the zeroth cycle or block consist only of 
		$0$, and thus, may be ignored.

		\begin{figure}[ht]
			\begin{minipage}[t]{72pt}
				\centering
				$\phi_{0,1_{\V}}$\\
				\psset{xunit=18pt,yunit=18pt}
				\begin{pspicture}(0,0)(3,7)	
					\multido{\n=0+1}{4}{\psline[linewidth=0.5pt](\n,0)(\n,7)}
					\multido{\n=0+1}{8}{\psline[linewidth=0.5pt](0,\n)(3,\n)}
					\psline[linewidth=1.75pt](0,0)(0,3)(1,3)(1,5)(3,5)(3,7)
					\rput(0.5,0.5){$1$}
					\rput(1.5,0.5){$1$}
					\rput(2.5,0.5){$1$}
					\rput(0.5,1.5){$0$}
					\rput(1.5,1.5){$0$}
					\rput(2.5,1.5){$0$}
					\rput(0.5,2.5){$0$}
					\rput(1.5,2.5){$0$}
					\rput(2.5,2.5){$0$}
					\rput(0.5,3.5){$0$}
					\rput(1.5,3.5){$0$}
					\rput(2.5,3.5){$0$}
					\rput(0.5,4.5){$1_2$}
					\rput(1.5,4.5){$0$}
					\rput(2.5,4.5){$0$}
					\rput(0.5,5.5){$0$}
					\rput(1.5,5.5){$0$}
					\rput(2.5,5.5){$1_1$}
					\rput(0.5,6.5){$0$}
					\rput(1.5,6.5){$1_3$}
					\rput(2.5,6.5){$0$}
				\end{pspicture}
			\end{minipage}
			\begin{minipage}[t]{72pt}
				\centering
				$\psi_{0,1_{\V}}$\\
				\psset{xunit=18pt,yunit=18pt}
				\begin{pspicture}(0,0)(3,7)	
					\multido{\n=0+1}{4}{\psline[linewidth=0.5pt](\n,0)(\n,7)}
					\multido{\n=0+1}{8}{\psline[linewidth=0.5pt](0,\n)(3,\n)}
					\psline[linewidth=1.75pt](0,0)(0,3)(1,3)(1,5)(2,5)(2,6)(3,6)(3,7)
					\rput(0.5,0.5){$1$}
					\rput(1.5,0.5){$1$}
					\rput(2.5,0.5){$1$}
					\rput(0.5,1.5){$0$}
					\rput(1.5,1.5){$0$}
					\rput(2.5,1.5){$0$}
					\rput(0.5,2.5){$0$}
					\rput(1.5,2.5){$0$}
					\rput(2.5,2.5){$0$}
					\rput(0.5,3.5){$1_1$}
					\rput(1.5,3.5){$0$}
					\rput(2.5,3.5){$0$}
					\rput(0.5,4.5){$0$}
					\rput(1.5,4.5){$0$}
					\rput(2.5,4.5){$0$}
					\rput(0.5,5.5){$0$}
					\rput(1.5,5.5){$1_1$}
					\rput(2.5,5.5){$0$}
					\rput(0.5,6.5){$0$}
					\rput(1.5,6.5){$0$}
					\rput(2.5,6.5){$1_4$}
				\end{pspicture}
			\end{minipage}
			\caption{Examples of $0,1_v$-tableaux}\label{tabs}
		\end{figure}

		As an illustration, let $\V=(2i+4,1)$, and $\phi_{0,1_{\V}}$ and $\psi_{0,1_{\V}}$ be the $0,1_v$-tableaux in Figure~\ref{tabs}, where $\phi_{0,1_{\V}}$ 
		is of shape $\begin{bmatrix}3 & 1 & 1 \\ 2 & 4 & 4 \end{bmatrix} \in T_{0,0}[5,3]$ and $\psi_{0,1_{\V}}$ is of shape 
		$\begin{bmatrix} 3 & 1 & 0 \\ 2 & 4 & 5\end{bmatrix}\in Td_{0,0}[5,3]$. The lattice path associated with $\phi_{0,1_{\V}}$ is $VVHHVVHVVV$.  Hence, the 
		associated block minima are $0,1,4,5,7,8$. The second step on the lattice path is an $H$, which is on a column with a $1_1$ on the second row. This means that $2_1$
		is an element of the second block. Continuing, we obtain that the corresponding partition is $\{0,3_3\}\{1,2_1\}\{4,6_2\}\{5\}\{7\}\{8\}\in\Part(8,5;v)$. For $\psi_{0,1_{\V}}$, 
		the resulting lattice path is $VHHVHVV$.  Thus, the cycle minima are $0,3,5,6$ and $\omega_0=0356$, $\omega_1=01_4356$, $\omega_2=01_42_1356$, 
		$\omega_3=01_42_134_156$.  Therefore, the corresponding permutation is $(0\,1_4\,2_1)(3\,4_1)(5)(6)\in\Perm(6,3;v)$.

		We now apply the approach described above to several special cases.  
		\begin{enumerate}[1.]
			\item Merris' $p$-Stirling numbers \cite{Mer} and Koutras' non-central Stirling numbers $c_p(n,k)$ and $S_p(n,k)$ ~\cite{Kou} are obtained by letting 
			$\V=(i+p,1)$ and $\alpha=\beta=0$, or by letting $\V=(i,1)$, $\alpha=p$, and $\beta=0$.  We can therefore interpret, for example, $c_p(n,k)$
			as the number of permutations of $[n]_0$ into $(k+1)$ disjoint cycles such that the nonminimal elements of the zeroth cycle take $p$ colors, or as the number of
			permutations of $[n+p]$ into $(k+p)$ disjoint cycles such that $1,2,\ldots,p$ are in distinct cycles. Alternatively, if $[m,n]:=\{m,m+1,\ldots,n\}$,
			then $c_p(n,k)$ is the number of permutations of $[-p+1,n]$ into $(k+p)$ disjoint cycles such that the nonpositive elements are in distinct cycles.
			
			\item We get Sun's $p$-Stirling numbers $c^p(n,k)$ and $S^p(n,k)$ \cite{Sun} when $\V=(i^p,1)$ and $\alpha=\beta=0$.  Observe that the number of $0,1_\V$-tableaux is
			the same as the number of $p$-tuples of $0,1_{\V'}$-tableaux of identical shape, where $\V'=(i,1)$. Hence, for instance, $c^p(n,k)$ gives the number of permutations of $[n]$ into
			$k$ disjoint cycles such that every nonminimal element $a_j$ is assigned $(a_j-1)^{p-1}$ colors, as well as the the number of $p$-tuples of permutations of $[n]$ having
			$k$ disjoint cycles such that the permutations have identical cycle minima. It is easy to see that the latter interpretation is equivalent to that of Sun's, namely,
			that $c^p(n,k)$ counts the number of $k$-matrix permutations of $M(n,p)$ (see \cite{Sun}).
			
			\item The Jacobi Stirling numbers $Jc(n,k)$ and $JS(n,k)$ \cite{Eve} are obtained by setting $\V=(i(i+z),1)$, where $z\in\mathbb R$ and $\alpha=\beta=0$.
			They reduce to the Legendre Stirling numbers $Lc(n,k)$ and $LS(n,k)$ when $z=1$. Further generalizations of these sequences
			were given in OEIS, where $\V=(\prod_{i=1}^j (i+a_i),1)$, where $a_i\in\mathbb{N}$ (see for example, sequences A071951, A089504, A090215, and A090217 \cite{Bar}). 
			
			Using the approach described above, if $\V=(\prod_{i=1}^j (i+a_i),1)$, then for a $\B$-tableau $\phi$, $\V_{\wt}(\phi)$ equals the number of
			$0,1_{\V}$-tableaux of shape $\phi$, which in turn, equals the number of $j$ tuples $(\phi_{0,1_{{\V}_1}},\phi_{0,1_{{\V}_2}},\ldots,\phi_{0,1_{{\V}_j}})$
			where $\phi_{0,1_{{\V}_i}}$ is a $0,1_{{\V}_i}$-tableaux where ${\V_i=(i+a_i,1)}$. Hence, $c^\V_{\alpha,\beta}[n,k]$ is the number of $j$ tuples
			$(\sigma_1,\sigma_2,\ldots,\sigma_j)$, where $\sigma_i$ is a permutation of $[-a_i+1,n]$ into $(k+a_i)$ disjoint cycles such that the nonpositive elements are in distinct cycles
			and the positive minimal elements of $\sigma_1,\sigma_2,\ldots,\sigma_j$ are identical.
			
			Let $\V=(i(i+1),1)$ and $\alpha=\beta=0$. Let $[\pm n]_0 :={\{0,1,-1,2,-2,\ldots,n,-n\}}$.  We say that a subset forming a partition of $[\pm n]_0$ contains both
			copies of a nonzero integer $j$ if both $j$ and $-j$ are in that subset. Define $\Part_{\pm}(n,k)$ as the set of partitions of $[\pm n]_0$ into $k+1$ subsets 
			$D_0,D_1,D_2,\ldots,D_k$ satisfying the following properties:
			\begin{enumerate}[(a)]
				\item $0\in D_0$ and $D_0$ is not allowed to contain both copies of the same nonzero integer.
				
				\item The subsets $D_1,D_2,\ldots,D_k$ are nonempty and indistinguishable. The subset minimum of every subset $D_j$, where $1\leq j\leq k$, is the
				positive integer $|m|$ such that $|m|\leq |m'|$ for all $m'\in D_j$. Every subset must contain both copies of its subset minimum but no other element
				occurs in two copies in that subset.
			\end{enumerate}
			Gelineau and Zang \cite{Gel} showed that $LS(n,k)=|\Part_{\pm}(n,k)|$. It is straightforward to construct a	bijection between our combinatorial interpretation and those of Zhang
			and Gelineau. 
						
			\item Miceli's poly-Stirling numbers \cite{Mic} is exactly the case when $\V=(v,1)$, where $v\in\mathbb N[i]$ and $\alpha=\beta=0$.  We have thus provided an alternative
			interpretation from those given in~\cite{Mic}.

			\item All of the examples so far have considered only $\V$-Stirling numbers where $\V=(v,1)$. However, we can also apply the same method of obtaining a permutation or
			partition to the boxes below the lattice path. Recall that the $b$-Stirling number of the first kind are defined by the generating function in \eqref{ooh} and are
			obtained by letting $\V=(i,i)$ and $\alpha=\beta=0$.  They can be combinatorially interpreted as the number of pairs of permutations $(\sigma_1, \sigma_2)$, where 
			$\sigma_1$ is a permutation of $[n]$ into disjoint cycles $C_1,C_2,\ldots, C_k$, and $\sigma_2$ is a permutation of $[n]$ into disjoint cycles $C'_1,C'_2,\ldots, C'_k$, such
			that for $1\leq j\leq k$, the sum of the minimal elements of $C_j$ and $C'_{k-j+1}$ is $n+1$. Here, the cycles $C'_j$ are obtained by labeling the lattice path starting
			instead from the lower left hand corner to the upper right hand corner.
		\end{enumerate}

		One can extend the approach we described here to any nonnegative integral weight function.  This involves expressing each $v(\phi_{1,j})$ and $w(\phi_{2,j})$ as a
		suitable sum of at most $\phi_{1,j}+1$ and $\phi_{2,j}+1$ nonnegative integers, respectively, and will entail a more general class of colored permutations and partitions. 
		Details are left to the reader.
		
	\section{Recommendations}
		The $\V$-Stirling numbers for which one of the weight functions is not a constant constitute a family of sequences that do not belong to the $\U$-Stirling numbers.  What we
		currently know about these sequences are limited to properties considered in this paper. Explicit formulas of these sequences are, to the authors' knowledge, still unknown. 
		It might be interesting to explore the corresponding $q$- and $p,q$-analogues of these numbers, their combinatorial interpretations in terms of graphs, rook placements, and vector 
		spaces, and their expressions in terms of finite differences.  The bimodality of the $p,q$-binomial coefficients was recently proved by Su and Wang in ~\cite{Su}. It would also be
		worthwhile to determine the modality of $\V$-Stirling numbers for particular weight functions.

	\section*{Acknowledgements}
		K.J.M.~Gonzales would like to thank the Department of Science and Technology (DOST) through the ASTHRD Program for financial support during his stay at the University of the 
		Philippines Diliman.

	\bibliographystyle{amsplain}

\begin{thebibliography}{99}
		\bibitem{And}
		G.E.~Andrews, W.~Gawronski and L.L.~Littlejohn, The Legendre-Stirling numbers, {\em Discrete Math.} {\bf 311:14} (2011) 1255-1272.
		\bibitem{Bar}
		N.J.A.~Sloane, The On-Line Encyclopedia of Integer Sequences, \url{http://oeis.org}. A080251, A071951, A089504, A090215, A090217.
		\bibitem{Cig}
		J.~Cigler, Eine Charakterisierung der $q$-Exponentialpolynome, {\em\"Osterreich.~Akad.~Wiss.~Math.-Natur.~Kl.~Sitzungsber.~II} {\bf 208} (1999) 143-157.
		\bibitem{Cor}
		R.B.~Corcino, On $p,q$-binomial coefficients, {\em Integers} {\bf 8} (2008) \#A29
		\bibitem{Dzi}
		M.~Dziemia\'nczuk, First remark on a $\zeta$-analogue of the Stirling numbers, {\em Integers} {\bf 11} (2011) \#A9
		\bibitem{Ehr}
		R.~Ehrenborg, Determinant involving $q$-Stirling numbers, {\em Adv.~in Appl.~Math.} {\bf 31:4} (2003) 630-642.
		\bibitem{Eve}
		W.N.~Everitt, K.H.~Kwon, L.L.~Littlejohn, R.~Wellman, and G.J.~Yoon, Jacobi-Stirling numbers, Jacobi polynomials, and the left-definite analysis of the classical
		Jacobi differential expression, {\em J.~Comput.~Appl.~Math.} {\bf 208:1} (2007) 29-56.
		\bibitem{Gel}
		Y.~Gelineau and J.~Zeng, Combinatorial interpretation of Jacobi-Stirling numbers, {\em Electron.~J.~Combin.} {\bf 17} (2010) Research Paper 70
		\bibitem{Gou}
		H.W.~Gould and H.M.~Srivastava, Some combinatorial identities associated with the Vandermonde convolution, {\em Appl.~Math.~Comput.} {\bf 84:2-3} (1997) 97-102.
		\bibitem{Kon}
		J.~Konvalina, Generalized binomial coefficients and the subset-subspace problem, {\em Adv.~in Appl.~Math.} {\bf 21} (1998) 228-240.
		\bibitem{Kou}
		M.~Koutras, Noncentral Stirling numbers and some applications, {\em Discrete Math.} {\bf 42:1} (1982) 73-89.
		\bibitem{Lun}
		P.H.~Lundow, A.~Rosengren, On the $p,q$-binomial distribution and the Ising model, {\em Philos. Mag.} {\bf 90} (2010) 3313-3353.		
		\bibitem{Med}
		A.~de M\'edicis and P.~Leroux, Generalized Stirling numbers, convolution formulae and $p,q$-analogues, {\em Canad.~J.~Math} {\bf 47:3} (1995) 474-499.
		\bibitem{Med1}
		A.~de M\'edicis and P.~Leroux, A unified combinatorial approach for $q$-(and $p,q$-) Stirling numbers, {\em J.~Statist.~Plann.~Inference} {\bf 34:1} (1993) 89-105.
		\bibitem{Mer}
		R.~Merris, The $p$-Stirling numbers, {\em Turkish J.~Math} {\bf 24:4} (2000) 379-399.
		\bibitem{Mic}
		B.K.~Miceli, $m$-partition boards and poly-Stirling numbers, {\em J.~Integer Seq.} {\bf 13} (2010) Article 10.3.3.
		\bibitem{Rio}
		J.~Riordan, Combinatorial Identities, Wiley, New York, 1968.
		\bibitem{Sha}
		M.~Shattuck, On some relations satisfied by the $p,q$-binomial coefficient, {\em \v{S}iauliai Math.~Semin.} {\bf 6:14} (2011) 69-84.
		\bibitem{Su}
		X.-T.~Su and Y.~Wang, Proof of a conjecture of Lundow and Rosengren on the bimodality of $p,q$-binomial coefficients, {\em J.~Math.~Anal.~Appl.} {\bf 391:2} (2012) 653-656.
		\bibitem{Sun}
		Y.~Sun, Two classes of $p$-Stirling numbers, {\em Discrete Math.} {\bf 306:21} (2006) 2801-2805.
	\end{thebibliography}

\end{document}